\newcommand{\eps}{\varepsilon}
\newcommand{\dx}{\mathrm{d}}
\newcommand{\si}{\operatorname{si}}
\newcommand{\R}{\mathbb{R}}
\newcommand{\Stilde}{\widetilde{S}}
\newcommand{\Ttilde}{\widetilde{T}}
\newcommand{\Rtilde}{\widetilde{R}}
\newcommand{\Jtilde}{\widetilde{J}}
\newcommand{\Etilde}{\widetilde{E}}
\newcommand{\Odip}[2]{\mathcal{O}_{#1}\!\left(#2\right)\mathchoice{\!}{}{}{}}
\newcommand{\Odig}[1]{\mathcal{O}\Bigl(#1\Bigr)\mathchoice{\!}{}{}{}}
\newcommand{\Odim}[1]{\mathcal{O}\bigl(#1\bigr)}
\newcommand{\Odi}[1]{\Odip{}{#1}}
\newcommand{\odip}[2]{{o}_{#1}\!\left(#2\right)\mathchoice{\!}{}{}{}}
\newcommand{\odi}[1]{\odip{}{#1}}
\renewcommand{\qedsymbol}{$\square$}
\newenvironment{Proof}[1][Proof]{\par\noindent\textbf{#1.}~}
{\hfill\qedsymbol\smallskip\par}
\newtheoremstyle{slantedtheorems}% name
{10pt}%      Space above
{6pt}%      Space below
{\slshape}%         Bo\dx y font
{}%         Indent amount (empty = no indent, \parindent = para indent)
{\bfseries}% Thm head font
{.}%        Punctuation after thm head
{.5em}%     Space after thm head: " " = normal interword space;
\theoremstyle{slantedtheorems}
\newtheorem{Theorem}{Theorem}
\newtheorem{Definition}{Definition}
\newtheorem{Lemma}{Lemma}
\newtheoremstyle{nonum}% name
{10pt}%      Space above
{6pt}%      Space below
{\slshape}%         Bo\dx y font
{}%         Indent amount (empty = no indent, \parindent = para indent)
{\bfseries}% Thm head font
{.}%        Punctuation after thm head
{.5em}%     Space after thm head: " " = normal interword space;
\theoremstyle{nonum}
\newtheorem{nonumTheorem}{Theorem}
\begin{document} 

\title{Explicit relations between primes in short intervals
and exponential sums over primes}
%\date{\today}
\date{}
\author{Alessandro Languasco \lowercase{and} Alessandro Zaccagnini}
%
%    \subjclass is required.
\subjclass[2010]{Primary 11M26; Secondary  11N05, 11M45.}
\keywords{Exponential sum over primes; primes in short intervals;
pair-correlation conjecture}
\begin{abstract}
Under the assumption of the Riemann Hypothesis (RH), 
we prove explicit quantitative relations between hypothetical error terms in the 
asymptotic formulae for truncated mean-square average of
exponential sums over primes and in the mean-square of primes 
in short intervals. We also remark that such relations are connected with a more
precise form of Montgomery's pair-correlation conjecture.
\end{abstract}
\maketitle

\section{Introduction}
In many circle method applications a key role is played by the asymptotic
behavior as $X\to\infty$ of the truncated mean square of the exponential 
sum over primes, i.e. by
\begin{equation*} 
R(X,\xi) =  \int_{-\xi}^\xi  \vert S(\alpha)-T(\alpha)\vert^2\, \dx \alpha,
\qquad 
\frac{1}{2X} \leq \xi\leq \frac{1}{2} ,
\end{equation*}
where  
$S(\alpha)= \sum_{n\leq X}\Lambda(n)e(n\alpha)$, 
$T(\alpha)=\sum_{n\leq X} e(n\alpha)$,
$e(x)=e^{2\pi i x}$ and $\Lambda(n)$ is the von Mangoldt function. 
In 2000  the first author and Perelli  \cite{LanguascoP2000a}
studied how to connect, under the assumption
of the Riemann Hypothesis (RH) and of Montgomery's
pair-correlation conjecture, the behaviour as $X\to\infty$
of $R(X,\xi)$  with the one of the mean-square of primes in short intervals, 
\textsl{i.e.}, with
\begin{equation*} 
J(X,h) = \int_1^X ( \psi(x+h) - \psi(x) - h )^2 \, \dx x,
\qquad
1\leq h \leq X,
\end{equation*}
where $\psi(x)=\sum_{n\leq x}\Lambda(n)$.
Recalling that Goldston and Montgomery \cite{GoldstonM1987}
proved that the asymptotic behavior of $J(X,h)$ as $X\to \infty$
is related with Montgomery's pair correlation function
\[
F(X,T)= 4 \sum_{0<\gamma,\gamma'\leq T} 
\frac{X^{i(\gamma-\gamma')}} {4+(\gamma-\gamma')^2},
\]
where   $\gamma,\gamma'$ run over the
imaginary part of the non-trivial zeros of the Riemann zeta function,
the following result was proved in \cite{LanguascoP2000a}.
\begin{nonumTheorem}
Assume RH. As $X\to\infty$, the following statements are equivalent:
\begin{enumerate}[(i)]
\item for every $\eps >0$, $R(X,\xi) \sim 2X\xi \log X\xi$
 uniformly for $X^{-1/2+\eps}\leq \xi \leq 1/2$;
\item for every $\eps >0$, $ J(X,h) \sim hX\log(X/h)$
 uniformly for $1\leq h \leq X^{1/2-\eps}$;
\item for every $\eps >0$ and $A\geq 1$, $ F(X,T) \sim (T/2\pi)\log
\min(X,T)$ uniformly for $X^{1/2+\eps} \leq T \leq X^A$.
\end{enumerate}
\end{nonumTheorem}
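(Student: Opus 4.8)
The plan is to treat the three statements as a cycle and to isolate the genuinely new link. The equivalence (ii)$\,\Leftrightarrow\,$(iii) is essentially the content of the Goldston--Montgomery theorem of \cite{GoldstonM1987}, which under RH ties the short-interval variance $J(X,h)$ to the pair-correlation function $F(X,T)$ through the correspondence $T\sameorder X/h$, so I would invoke it as a black box. It then remains to prove (i)$\,\Leftrightarrow\,$(ii), transferring between the exponential-sum average $R(X,\xi)$ and $J(X,h)$. The right dictionary is $\xi\sameorder 1/h$: under it the two ranges $X^{-1/2+\eps}\le\xi\le\tfrac12$ and $1\le h\le X^{1/2-\eps}$ correspond, and the target main terms are compatible, since (i) gives $R(X,1/h)\sim 2(X/h)\log(X/h)$ while (ii) asks for $J(X,h)\sim hX\log(X/h)$.

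The bridge I would build is a Parseval identity. Put $a_n=\Lambda(n)-1$ for $n\le X$ and $a_n=0$ otherwise, so that $S(\alpha)-T(\alpha)=\sum_n a_n e(n\alpha)$. For integral $h$ a direct expansion gives
\[
\int_{-1/2}^{1/2}\vert S(\alpha)-T(\alpha)\vert^2 K_h(\alpha)\,\dx\alpha
=\sum_{m}\Bigl(\sum_{m<n\le m+h}a_n\Bigr)^2,
\qquad
K_h(\alpha)=\Bigl\vert\sum_{k=1}^{h}e(k\alpha)\Bigr\vert^2=\frac{\sin^2(\pi h\alpha)}{\sin^2(\pi\alpha)}.
\]
Since $\int_m^{m+1}(\psi(x+h)-\psi(x)-h)^2\,\dx x=\bigl(\sum_{m<n\le m+h}a_n\bigr)^2$ for integral $h$, the right-hand side is exactly $J(X,h)$ up to negligible edge terms, and non-integral $h$ are reached by interpolation. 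Thus $J(X,h)$ is the integral of $\vert S-T\vert^2$ against the Fej\'er kernel $K_h$, which has total mass $\int_{-1/2}^{1/2}K_h=h$ and concentrates about the origin at scale $1/h$.

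To read off (ii) from (i), note that (i) asserts, heuristically, that the density $\vert S(\alpha)-T(\alpha)\vert^2$ averages to $X\log(X\vert\alpha\vert)$, whence
\[
J(X,h)\approx\int_{-1/2}^{1/2}X\log(X\vert\alpha\vert)\,K_h(\alpha)\,\dx\alpha
=hX\log(X/h)+X\!\int_{-1/2}^{1/2}\log(h\vert\alpha\vert)\,K_h(\alpha)\,\dx\alpha .
\]
The first term uses $\int_{-1/2}^{1/2}K_h=h$; the second is $\Odi{hX}$, because $\int_{-\infty}^{\infty}\log\vert u\vert\,\sin^2(\pi u)/(\pi u)^2\,\dx u$ converges. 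As $\log(X/h)\to\infty$ throughout $h\le X^{1/2-\eps}$ the remainder is of lower order, and this is precisely (ii). The converse (ii)$\,\Rightarrow\,$(i) is a deconvolution: one must recover the behaviour of $M(\xi)=\tfrac12 R(X,\xi)=\int_0^\xi\vert S-T\vert^2$ from its $K_h$-smoothed averages, for which I expect to need Tauberian input of the Goldston--Montgomery type, exploiting the non-negativity of $\vert S-T\vert^2$ and the monotonicity of $M$.

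The main obstacle is exactly this transfer, and it is genuinely multi-scale rather than local. The integral defining $J(X,h)$ samples the density of $\vert S-T\vert^2$ at every scale from $\vert\alpha\vert\sameorder 1/h$ up to $\vert\alpha\vert\sameorder 1$, because $K_h$ carries an order-$h$ amount of mass well outside its central spike $\vert\alpha\vert\sameorder 1/h$; this is why hypothesis (i) is needed uniformly down to $\xi=X^{-1/2+\eps}\sameorder 1/h$, and why no pointwise-to-pointwise substitution can succeed. Turning the density heuristic into a rigorous two-sided statement---weighing the logarithmic tail of $K_h$ against $\log(X\vert\alpha\vert)$ with enough precision to produce the sharp constant, controlling the contribution of the unresolved range $\vert\alpha\vert<X^{-1/2+\eps}$ through the endpoint value of $M$, and securing \emph{uniformity} over the whole ranges---is where the real work, and the Tauberian machinery, lies.
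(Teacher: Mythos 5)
Your plan coincides with the route taken by the paper and its source \cite{LanguascoP2000a}: the Parseval/Fej\'er identity you propose is exactly Lemma \ref{Lemma-Go} (Gallagher's lemma, with $E(X,h)$ accounting for your ``edge terms'' and forcing the restriction $h\le X^{1/2-\eps}$), the evaluation of $\int\log(h\alpha)\,K(\alpha,h)\,\dx\alpha$ is Lemma \ref{K-U-lemma}, the substitution of the density $X\log(X\alpha)$ followed by partial integration against the hypothesis is precisely the proof of Theorem \ref{Th-anda}, the Tauberian deconvolution you anticipate for (ii)$\Rightarrow$(i) is carried out with the Goldston--Montgomery kernel $K_\eta$ in the proof of Theorem \ref{Th-rianda}, and (ii)$\Leftrightarrow$(iii) is indeed taken from \cite{GoldstonM1987}. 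I see no gap beyond the technical work you already identify as remaining.
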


We remark that the uniformity ranges here are smaller than the
ones in \cite{GoldstonM1987} and that it 
is due to the presence of $E(X,h)$
(a term which naturally  comes from Gallagher's lemma), 
see \eqref{E(X,h)-def} and Lemma \ref{Lemma-Go}.

In 2003 Chan \cite{Chan2003} formulated a more precise pair-correlation
hypothesis and gave explicit results for the connections between the error 
terms in the asymptotic formulae for $F(X,T)$ and $J(X,h)$.
Such results were recently extended and improved by the authors of this paper 
in a joint work with Perelli \cite{LanguascoPZ2012a}: writing
\begin{align}
\label{F-asymp}
F(X,T)  &= \frac{T}{2\pi}\Bigl(\log\frac{T}{2\pi} - 1\Bigr) + R_F(X,T), \\
\label{J-asymp}
J(X,h) &=  h X\Bigl(\log \frac{X}{h} + c' \Bigr) + R_J(X,h)
\end{align}
and $c'=-\gamma-\log(2\pi)$ ($\gamma$ is Euler's constant), they gave explicit 
relations between \eqref{F-asymp} and \eqref{J-asymp} with error terms
essentially of type
\[
R_F(X,T) \ll \frac{T^{1-a}}{(\log T)^{b}}  \qquad \text{and} \qquad 
R_J(X,h) \ll \frac{h X}{(\log X)^{b}} \Bigl(\frac{h}{X}\Bigr)^{a} ,
\]
with $X$, $T$ and $h$ in suitable ranges and $a,b\geq 0$.

Our aim here is to prove explicit connections between the error terms
in the asymptotic formulae for $R(X,\xi)$ and $J(X,h)$ in the same
fashion of \cite{LanguascoPZ2012a}, but, recalling the 
previously cited theorem in \cite{LanguascoP2000a}, we have to restrict 
our attention to the range $1\leq h \leq X^{1/2-\eps}$ 
(or, equivalently, to $X^{-1/2+\eps}\leq \xi \leq 1/2$). 
In what follows the implicit constants may depend on $a,b$.
Our first result is 
\begin{Theorem}
\label{Th-anda} 
Assume RH and let $1 \leq h\leq X^{1/2 - \eps} $, 
$X^{-1/2 + \eps} \leq \xi\leq 1/2$.
Let further $0\leq a <1$, $b \geq 0$, $(a,b)\neq (0,0)$ be fixed.
If, for some constant $c\in \R$, we have
\begin{equation}
\label{Thanda-hypothesis}
R(X,\xi) = 2X\xi \log X\xi + c X\xi + 
\Odi{\frac{(X\xi)^{1-a}}{(\log X\xi)^{b}}},
\end{equation}
then
\begin{equation*} 
J(X,h) = hX \Bigl(\log\frac{X}{h} + c'\Bigr) 
+ \Odi{X+E(X,h) + R_{a,b}(X,h)},
\end{equation*}
provided that \eqref{Thanda-hypothesis} holds uniformly for 
\begin{equation}
\label{xi-h-cond}
\frac{1}{h}
\Bigl(\frac{h}{X}\Bigr)^{a}  (\log X)^{-b-4} 
\leq \xi \leq 
\frac{1}{h}
\Bigl(\frac{X}{h}\Bigr)^{a}  (\log X)^{b+4},
\end{equation} 
where 
\begin{equation}
\label{c'-def}
c' = c/2+2  - \gamma - \log(2\pi),
\end{equation} 
\begin{equation}
\label{R-a-b-def}
R_{a,b}(X,h)
=
\begin{cases}
h X \log\log X (\log X)^{-b}  & \text{if} \ a=0\\
h X  \bigl(h/X\bigr)^{a} (\log X)^{-b}  & \text{if} \ a>0,
\end{cases}
\end{equation}
and, for every fixed $\eps>0$, we define
\begin{equation}
\label{E(X,h)-def}
E(X,h) 
=
\begin{cases}
(h+1)^3 (\log X)^{2}  &\text{(uncond.) uniformly for}\ 0< h \leq X^{\eps} \\
h^3      &\text{(uncond.) uniformly for}\ X^{\eps}\leq h \leq X\\
(h+1)X (\log X)^{4}  &\text{(under RH) uniformly for}\ 0< h \leq X.
\end{cases}
\end{equation}
\end{Theorem}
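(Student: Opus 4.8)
The plan is to run Gallagher's lemma, in the form of Lemma~\ref{Lemma-Go}, backwards: first rewrite $J(X,h)$ as a mean square of the exponential sum weighted by the Fej\'er-type kernel $K_h(\alpha)=\left(\dfrac{\sin\pi h\alpha}{\pi\alpha}\right)^2$, and then feed in the hypothesis \eqref{Thanda-hypothesis} on $R(X,\xi)$ by partial summation. Writing $a_n=\Lambda(n)-1$, the identity $\int_{-\infty}^{+\infty}K_h(\alpha)\vert S(\alpha)-T(\alpha)\vert^2\,\dx\alpha=\int_{-\infty}^{+\infty}\bigl\vert\sum_{x<n\le x+h}a_n\bigr\vert^2\,\dx x$ together with the passage from the integer count $\lfloor x+h\rfloor-\lfloor x\rfloor$ to $h$ is exactly what Lemma~\ref{Lemma-Go} packages, at the cost of the term $E(X,h)$ of \eqref{E(X,h)-def}, so that under RH I expect
\[
J(X,h)=2\int_0^{+\infty}K_h(\alpha)\,\vert S(\alpha)-T(\alpha)\vert^2\,\dx\alpha+\Odi{E(X,h)}.
\]
Because $R(X,\xi)$ is only available for $\xi\le1/2$, the ``periodic tail'' $\alpha>1/2$ must be treated separately: there I would bound $K_h(\alpha)\le(\pi\alpha)^{-2}$ and use $\int_0^1\vert S-T\vert^2\,\dx\alpha=\sum_{n\le X}(\Lambda(n)-1)^2\ll X\log X$ with the $1$-periodicity of $S-T$, obtaining $\Odi{X\log X}$, which is absorbed into the RH case $(h+1)X(\log X)^4$ of $E(X,h)$.

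For the main range I would use that $\vert S-T\vert^2$ is even, so $R(X,\xi)=2\int_0^\xi\vert S-T\vert^2$ and the integral above equals $\int_0^{+\infty}K_h(\alpha)\,\dx R(X,\alpha)$. The kernel concentrates at $\alpha\asymp1/h$, which by \eqref{xi-h-cond} sits inside the range where \eqref{Thanda-hypothesis} is assumed; integrating by parts and inserting $R(X,\alpha)=2X\alpha\log(X\alpha)+cX\alpha+\mathcal E(\alpha)$, and then completing the main-term integral back to $[0,+\infty)$ (the recovered pieces being smaller than $R_{a,b}(X,h)$ by a factor $\log X$), the leading contribution reduces to the explicit evaluations
\[
\int_0^{+\infty}K_h(\alpha)\,\dx\alpha=\frac h2,\qquad\int_0^{+\infty}K_h(\alpha)\log\alpha\,\dx\alpha=\frac h2\bigl(1-\gamma-\log(2\pi h)\bigr),
\]
the second obtained from $\int_0^{+\infty}u^{-2}\sin^2u\,\dx u=\pi/2$ and $\int_0^{+\infty}u^{-2}\sin^2u\,\log u\,\dx u=\tfrac\pi2(1-\gamma-\log2)$ after the substitution $u=\pi h\alpha$. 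Combining the logarithmic and linear parts of the main term then yields precisely $hX\bigl(\log(X/h)+c'\bigr)$ with $c'=c/2+2-\gamma-\log(2\pi)$, in agreement with \eqref{c'-def}, while the low- and high-frequency pieces lying outside the concentration region, estimated via $K_h(\alpha)\le\min(h^2,(\pi\alpha)^{-2})$ and the endpoint values of $R$, contribute only $\Odi{X}$ together with terms of smaller order.

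The remaining and most delicate step, which I expect to be the main obstacle, is to show that the error $\mathcal E(\alpha)\ll(X\alpha)^{1-a}(\log X\alpha)^{-b}$ integrates to exactly $R_{a,b}(X,h)$. After integration by parts this is the estimation of $\int\vert\mathcal E(\alpha)\vert\,\vert K_h'(\alpha)\vert\,\dx\alpha$ over the range \eqref{xi-h-cond}, and the key is the two-regime behaviour $K_h'(\alpha)\ll h^4\alpha$ for $\alpha\le1/h$ (a genuine cancellation, since the naive bound $h^2/\alpha$ drops to $h^4\alpha$) and $K_h'(\alpha)\ll h\alpha^{-2}$ for $\alpha>1/h$. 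Splitting at $\alpha=1/h$, the piece $\alpha<1/h$ is controlled by its endpoint $1/h$ and gives $h^{1+a}X^{1-a}(\log X)^{-b}$, while the piece $\alpha>1/h$ is governed by $\int\alpha^{-1-a}(\log X\alpha)^{-b}\,\dx\alpha$; this last integral is precisely where the dichotomy of \eqref{R-a-b-def} arises, producing the extra $\log\log X$ when $a=0$ (the logarithmic length of $[\xi_1,\xi_2]$ being $\asymp\log\log X$) and the clean power $(h/X)^a$ when $a>0$. The real difficulty is the bookkeeping: one must verify that the widths $(\log X)^{\pm(b+4)}$ built into \eqref{xi-h-cond} make every truncation remainder strictly smaller than $R_{a,b}(X,h)$, so that assembling the main term, $E(X,h)$, $R_{a,b}(X,h)$ and the $\Odi{X}$ leftovers gives the asserted formula.
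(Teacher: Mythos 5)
Your proposal is correct and follows essentially the same route as the paper: Gallagher's lemma (Lemma \ref{Lemma-Go}) to pass from $J(X,h)$ to a Fej\'er-weighted mean square of $R(\alpha)$, the mass and log-mass integrals of Lemma \ref{K-U-lemma} for the main term and the constant $c'$, and a three-range split in which the central range \eqref{xi-h-cond} is handled by partial integration against the hypothesis using the kernel-derivative bounds of Lemma \ref{K'-lemma}, while the outer ranges use the trivial kernel bounds together with the RH estimate $\int_{-\xi}^{\xi}\vert R\vert^2\ll X\xi(\log X)^4$; the only cosmetic difference is that you work with the continuous kernel $U(\alpha,h)$ on $[0,+\infty)$ and dispose of $\alpha>1/2$ by periodicity, where the paper uses its periodization $K(\alpha,h)$ on $[-1/2,1/2]$. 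One small bookkeeping correction: with the cut points at the endpoints of \eqref{xi-h-cond} the two outer ranges contribute $\Odim{h^{2}U_{1}X(\log X)^{4}}$ and $\Odim{X(\log X)^{4}/U_{2}}$, i.e.\ terms of size $R_{a,b}(X,h)\gg X$ rather than $\Odi{X}$ as you state --- harmless, since $R_{a,b}(X,h)$ already appears in the asserted error term, and indeed this balancing is exactly what dictates the choice of the cut points.
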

We explicitly remark that, since $c'=-\gamma-\log(2\pi)$, by \eqref{c'-def} we
get $c=-4$ and that the conditions $\xi \leq 1/2$ and \eqref{xi-h-cond} imply
\[
h \gg X^{a/(a+1)}(\log X)^{(b+4)/(a+1)}
\]
which also leads to $R_{a,b}(X,h)\gg X$. It is also useful to remark that
the $E(X,h) \ll R_{a,b}(X,h)$ only for $h\ll X^{(1-a)/(2+a)} (\log
X)^{-b/(2+a)}$.

The technique used to prove Theorem \ref{Th-anda} is similar to the one in
Lemma~2 in \cite{LanguascoPZ2012a}; the main difference is in the 
presence of the terms $E(X,h)$ (which  comes from Lemma \ref{Lemma-Go})
and $\Odi{X}$ (which comes from the term $\Odi{1}$
in Lemma \ref{K-U-lemma}).

Concerning the opposite direction, we have
\begin{Theorem}
\label{Th-rianda} 
Assume RH and let $1 \leq h\leq X^{1/2 - \eps} $, $X^{-1/2 + \eps} \leq \xi\leq
1/2$.
Let further $0\leq a <1$, $b \geq 0$, $(a,b)\neq (0,0)$ be fixed.
If, for some constant $c'\in \R$, we have
\begin{equation}
\label{Thrianda-hypothesis}
J(X,h) = hX \Bigl(\log\frac{X}{h} + c'\Bigr) 
+ \Odig{\frac{hX}{(\log X)^{b}} \Bigl(\frac{h}{X}\Bigr)^{a}},
\end{equation}
then
\begin{equation}
\label{Thrianda-thesis}
R(X,\xi) = 2X\xi \log X\xi + c X\xi 
+ 
\Odi{\frac{(X \xi)^{3/(3+a)}}{(\log X)^{(b-a-2)/(3+a)}}
},
\end{equation}
provided that \eqref{Thrianda-hypothesis} holds uniformly for
\[
\frac{1}{\xi}
\frac{(X\xi)^{-a/(2a+6)}}{ (\log X)^{(a+b+4)/(2a+6)} }
\leq h \leq 
\frac{1}{\xi}
(X\xi)^{4a/(a+3)}  (\log X)^{(3a+4b+13)/(a+3)} ,
\] 
where $c = 2(c' - 2 + \gamma  + \log (2 \pi))$. 
\end{Theorem}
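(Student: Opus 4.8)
The plan is to read Theorem~\ref{Th-rianda} as the inversion of the Gallagher--Goldston identity that already underlies Theorem~\ref{Th-anda}. Writing $D(\alpha)=\vert S(\alpha)-T(\alpha)\vert^2$, Lemma~\ref{Lemma-Go} provides a relation of the form
\[
J(X,h)=2\int_0^{\infty}\Bigl(\frac{\sin\pi h\alpha}{\pi\alpha}\Bigr)^2 D(\alpha)\,\dx\alpha+\Odi{E(X,h)},
\]
so that $J(X,h)$ is a Fej\'er-kernel average, at scale $1/h$, of the very density $D$ whose sharp cut-off integral is $R(X,\xi)=2\int_0^\xi D(\alpha)\,\dx\alpha$. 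In the forward direction one inserts the asymptotics for $R$ and lets the error pass through; here the task is the opposite, namely to recover the sharp integral on $[0,\xi]$ from the knowledge of its Fej\'er averages. Since the Fej\'er kernel is $\sameorder h^2$ on the relevant range $0<\alpha\ll 1/h$, inverting the relation effectively divides by $h^2$: this is the mechanism that will turn the admissible error $\Odi{E(X,h)}$ and the hypothesis error into the much smaller quantities appearing in \eqref{Thrianda-thesis}, and it explains why the hypothesis is only needed for $h$ in a window around $1/\xi$.

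Concretely, I would build an explicit mollified indicator $\Phi_\xi(\alpha)$ of the interval $[0,\xi]$ as a weighted average $\int w(h)\,(\sin\pi h\alpha/\pi\alpha)^2\,\dx h$ of Fej\'er kernels, with $h$ ranging over the window in the statement and with a mollification width $\delta\xi$ governed by a free parameter $\delta$; a suitable difference of Fej\'er kernels (in the spirit of the Beurling--Selberg and Goldston--Montgomery constructions) yields such a $\Phi_\xi$ with controlled tails. Testing $D$ against $\Phi_\xi$ and using the displayed identity rewrites $2\int_0^\infty\Phi_\xi D$ as an explicit average of $J(X,h)$, into which I insert \eqref{Thrianda-hypothesis}. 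The main term $\tfrac12\int w(h)\,hX(\log(X/h)+c')\,\dx h$ is evaluated by Lemma~\ref{K-U-lemma}; the elementary constants produced by the resulting $\beta$-integrals (of type $\int_0^\infty(\sin\pi\beta/\pi\beta)^2\log\beta\,\dx\beta$) are exactly what forces $c=2(c'-2+\gamma+\log(2\pi))$, while the $\Odi{1}$ remainder of Lemma~\ref{K-U-lemma} yields an $\Odi{X}$ that, in the stated range, turns out to be dominated. The hypothesis error and the $\Odi{E(X,h)}$ term are each integrated against $w$; after the $1/h^2$ gain they contribute, respectively, a quantity of the shape $X^{1-a}(\log X)^{-b}$ times a power of the window, and a quantity controlled by \eqref{E(X,h)-def}.

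Finally I would balance the two genuinely competing contributions: the mollification error $2\int_0^\infty(\Phi_\xi-\mathbf 1_{[0,\xi]})D$, which near $\alpha=\xi$ is of size $\sameorder\delta\cdot X\xi\log X\xi$ (using only the leading-order, or monotone, behaviour of $D$), against the transported hypothesis error, which grows as $\delta$ shrinks and the window widens. Optimising $\delta$ as a suitable power $(X\xi)^{1/(a+3)}(\log X)^{\cdots}$ produces the exponent $3/(3+a)$ and the logarithmic power $(b-a-2)/(3+a)$ in \eqref{Thrianda-thesis}, and simultaneously pins down the admissible $h$-window, whose endpoints carry the very exponents $a/(2a+6)$ and $4a/(a+3)$. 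The main obstacle is precisely this inversion step: one must construct $\Phi_\xi$ so that its tails are small enough that the off-interval mass $\int_\xi^\infty(\ldots)D$ is controlled using only the crude information on $D=\vert S-T\vert^2$ available a priori, and then verify that both the $E(X,h)$ and $\Odi{X}$ contributions are dominated by the optimised term throughout the window; keeping $h\le X^{1/2-\eps}$ (equivalently $\xi\ge X^{-1/2+\eps}$) is what makes this possible, as in \cite{LanguascoP2000a}.
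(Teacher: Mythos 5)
Your proposal follows essentially the same route as the paper: the mollified indicator $\Phi_\xi$ built as a (signed) superposition $\int w(h)\,U(\alpha,h)\,\dx h$ of Fej\'er kernels is exactly the paper's identity $\widehat{K}_\eta(t)=\int_0^\infty K''_\eta(x)\,U(t,x)\,\dx x$ from \cite{LanguascoPZ2012a}, the sandwich against the sharp cut-off, the insertion of \eqref{Thrianda-hypothesis} in the admissible $h$-window together with RH/Selberg bounds outside it (Lemma \ref{Lem-RH-estim}), and the optimisation of the mollification width $\eta$ balancing $\eta X\xi\log X\xi$ against $(X\xi)^{1-a}\eta^{-(2+a)}(\log X)^{-b}$ all match the paper's argument and yield the exponent $3/(3+a)$. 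The sketch is correct in outline and in all the key mechanisms, so no further comparison is needed.
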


Note that for $a=0$ we have to take $b>2$ to get that the  error term in
\eqref{Thrianda-thesis} is $\odi{X\xi}$. 
The technique used to prove Theorem \ref{Th-rianda} is similar to the one in
Lemma 5 of \cite{LanguascoPZ2012a}; the main difference is in the 
use of Lemma \ref{Lem-RH-estim} which is needed to provide 
pair-correlation independent estimates of the involved quantities.

We remark that results similar to Theorems \ref{Th-anda}-\ref{Th-rianda}
can be proved using the weighted quantities
\[
\Stilde(\alpha)=\sum\limits_{n=1}^\infty\Lambda(n)e^{-n/X}e(n\alpha),
\quad
\Ttilde(\alpha) = \sum_{n=1}^\infty e^{-n/X}e(n\alpha), \quad
\]
\[
\Rtilde(X,\xi) = \int_{-\xi}^{\xi} \vert  
\Stilde(\alpha) - \Ttilde(\alpha) \vert ^2\, \dx \alpha ,
\quad
\Jtilde(X,h) = \int_0^\infty
( \psi(x+h) - \psi(x) - h )^2\,e^{-2x/X}\,\dx x
\]
in place of $S(\alpha)$, $T(\alpha)$,  $R(X,\xi)$ and  $J(X,h)$,
respectively.
The proofs are similar; in the analogue of Theorem \ref{Th-anda} 
the main difference is in using the second part of 
Lemma \ref{Lemma-Go} thus replacing  $E(X,h)$ with the sharper 
quantity $\Etilde(X,h)$ defined in \eqref{E-tilde-def}. 
Concerning the analogue of Theorem \ref{Th-rianda}, the key point is
in Eq.~\eqref{g-estim-PC}:  in this case we will be able to extend its range 
of validity to $\xi \leq x \leq \xi X^{1-\eps}$ and to get rid of the term 
$(x^3/\xi) (\log X)^{2}$.
These remarks lead to results which hold in wider ranges:
$1 \leq h\leq X^{1 - \eps}$ and $X^{-1 + \eps} \leq \xi\leq 1/2$.

The order of magnitude of $\Jtilde(X,h)$ can be directly
deduced from the one of $J(X,h)$ 
via  partial integration, see \emph{e.g.} eq.~\eqref{J-Jtilde}. 
Unfortunately, the vice-versa seems to be very hard to achieve;
this depends on the fact that we do not have sufficiently strong Tauberian
theorems to get rid of the exponential weight  in the 
definition of $\Jtilde(X,h)$.
Such a phenomenon is well known in the literature, see, \emph{e.g.},
Heath-Brown's remark on pages 385-386 of \cite{Heath-Brown1979d}.

\section{Some lemmas}

In the following we will need two weight functions.
\begin{Definition}
For  $h > 0$ we let
\begin{equation}
\label{K-U-def}
  K(\alpha, h)
  =
  \sum_{-h \le n \le h} (h - \vert n \vert) \, e(n \alpha)
  \qquad\text{and}\qquad
  U(\alpha, h)
  =
  \Bigl( \frac{\sin(\pi h \alpha)}{\pi \alpha} \Bigr)^2.
\end{equation}
\end{Definition}

We will need some information about the total mass of such weights.
\begin{Lemma}
\label{K-U-lemma}
For  $h > 0$, we have
\[
\int_{0}^{1/2} K(\alpha,h) \, \dx \alpha = \frac{h}{2}
 \qquad\text{and}\qquad
\int_{0}^{+\infty}  U ( \alpha, h)\, \dx \alpha = \frac{h}{2}.
\]
Moreover we also have
\begin{align*}
%  I(h) =
  \int_0^{1 / 2} \log(h \alpha) \, K(\alpha, h) \, \dx \alpha
  &=
  - \frac{h}{2} (\log(2 \pi) + \gamma - 1)
  +
  \Odi{1},
  \\
\int_{0}^{+\infty} \log(h \alpha) \, U(\alpha, h)\,  \dx \alpha 
 &= 
- \frac{h}{2}  (\log (2\pi)+ \gamma -1).
\end{align*}
\end{Lemma}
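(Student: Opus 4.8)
The plan is to compute each of the four integrals in turn, treating the two mass identities first since they are the simplest and then handling the two logarithmically-weighted integrals, which carry the main content. For the mass identities, the kernel $K(\alpha,h)$ is the Fej\'er-type kernel, and one can integrate term by term: since $\int_0^{1/2} e(n\alpha)\,\dx\alpha$ vanishes for every nonzero integer $n$ and equals $1/2$ for $n=0$, only the $n=0$ term survives, yielding $h/2$. For $U(\alpha,h)$ the computation rests on the classical Dirichlet integral: writing $U(\alpha,h) = (\sin(\pi h\alpha)/(\pi\alpha))^2$ and substituting $u = \pi h \alpha$ reduces $\int_0^\infty U(\alpha,h)\,\dx\alpha$ to $(h/\pi)\int_0^\infty (\sin u / u)^2\,\dx u = (h/\pi)\cdot(\pi/2) = h/2$, using the standard value $\int_0^\infty (\sin u/u)^2\,\dx u = \pi/2$.

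For the two logarithmic integrals, my approach would differ according to the kernel. For $U(\alpha,h)$ I would again substitute $u = \pi h\alpha$, so that $\log(h\alpha) = \log(u/\pi)$, reducing the problem to evaluating $(h/\pi)\int_0^\infty \log(u/\pi)(\sin u/u)^2\,\dx u$. The constant $\pi$ can be split off using the mass identity already computed, leaving the known moment $\int_0^\infty \log u \,(\sin u/u)^2\,\dx u$, whose value is $(\pi/2)(1-\gamma-\log 2)$; assembling the pieces should give exactly $-\tfrac{h}{2}(\log(2\pi)+\gamma-1)$, and crucially there is no error term because the integral is over the full half-line with no truncation.

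For $K(\alpha,h)$ the integration is over the finite range $[0,1/2]$ rather than the half-line, so the $\Odi{1}$ error term is unavoidable and reflects the difference between this finite computation and its infinite-kernel counterpart. The most efficient route is to compare $K$ with $U$ directly: I would show that $\int_0^{1/2}\log(h\alpha)K(\alpha,h)\,\dx\alpha$ agrees with $\int_0^\infty \log(h\alpha)U(\alpha,h)\,\dx\alpha$ up to $\Odi{1}$, by controlling both the difference $K(\alpha,h)-U(\alpha,h)$ on $[0,1/2]$ and the tail of the $U$-integral on $(1/2,\infty)$. Here one uses the quantitative bounds relating the two kernels (the Fej\'er kernel and its analytic envelope agree closely near the origin), together with the fact that $\log(h\alpha)$ is at most logarithmically large on the relevant range, so both the kernel discrepancy and the tail contribute only $\Odi{1}$ once integrated against the slowly-varying weight. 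Alternatively one can expand $K$ into its exponential sum, integrate $\int_0^{1/2}\log(h\alpha)e(n\alpha)\,\dx\alpha$ term by term, and sum, but tracking the constant this way is more delicate.

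The main obstacle I expect is pinning down the exact constant $-\tfrac{h}{2}(\log(2\pi)+\gamma-1)$ rather than merely the leading order: the appearance of Euler's constant $\gamma$ and $\log(2\pi)$ signals that one must correctly evaluate the logarithmic moment of the squared sinc function, and a careless substitution or an overlooked boundary contribution will corrupt the constant. Controlling the $\Odi{1}$ error for the $K$-integral in a way that isolates precisely the same leading constant as the $U$-integral — so that the two formulas are consistent — is the delicate bookkeeping step, since any discrepancy in the comparison between the finite and infinite kernels must be shown to be genuinely bounded and not to leak into the $h$-order term.
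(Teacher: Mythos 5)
Your handling of the two $U$-integrals and of $\int_0^{1/2}K(\alpha,h)\,\dx\alpha$ is essentially sound, with one small slip: $\int_0^{1/2}e(n\alpha)\,\dx\alpha$ does \emph{not} vanish for all nonzero integers $n$ (it equals $((-1)^n-1)/(2\pi i n)$, which is nonzero for odd $n$); you must first pair $n$ with $-n$, so that the integrand becomes $2(h-\vert n\vert)\cos(2\pi n\alpha)$, whose integral over $[0,1/2]$ does vanish. The value $\int_0^\infty (\log u)(\sin u/u)^2\,\dx u=\frac{\pi}{2}(1-\gamma-\log 2)$ is correct and assembles to the stated constant for the $U$-integral, matching what the paper obtains by citing Gradshteyn--Ryzhik directly.

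The genuine gap is in your plan for $\int_0^{1/2}\log(h\alpha)\,K(\alpha,h)\,\dx\alpha$. You assert that the kernel discrepancy on $[0,1/2]$ and the tail of the $U$-integral on $(1/2,\infty)$ each contribute only $\Odi{1}$. Neither does: both are of exact order $\log h$. By Poisson summation $K(\alpha,h)=\sum_{n\in\mathbb{Z}}U(n+\alpha,h)$, so on $[0,1/2]$ one has $K-U=\sum_{n\neq 0}U(n+\alpha,h)$, a nonnegative bounded function with $\int_0^{1/2}(K-U)\,\dx\alpha=\int_{1/2}^{\infty}U(\beta,h)\,\dx\beta\asymp 1$; hence $\int_0^{1/2}\log(h\alpha)(K-U)\,\dx\alpha=\log h\cdot\Theta(1)+\Odi{1}\sim (\log h)/\pi^2$. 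Likewise $\int_{1/2}^\infty\log(h\alpha)U(\alpha,h)\,\dx\alpha\sim(\log h)/\pi^2$, since $\sin^2(\pi h\alpha)$ averages to $1/2$ against $\alpha^{-2}$. Your argument as written therefore cannot deliver the $\Odi{1}$ error; it closes only if you observe that these two $\log h$ terms enter with opposite signs and cancel. Concretely, folding the discrepancy integral by periodicity turns the difference into $\int_{1/2}^\infty\log\bigl(\Vert\beta\Vert/\beta\bigr)U(\beta,h)\,\dx\beta$, and $\log(\Vert\beta\Vert/\beta)$ is integrable against $\beta^{-2}$ uniformly in $h$, which gives $\Odi{1}$. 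That cancellation step is the missing idea. The paper sidesteps the issue entirely by computing the $K$-integral directly: it expands $K$ into cosines, evaluates $\int_0^1\log\beta\,\cos(\pi n\beta)\,\dx\beta$ in terms of the sine integral $\si(\pi n)$, and sums using $\sum_{n\ge 1}\si(\pi n)/n=\frac{\pi}{2}(\log\pi-1)$, with the harmonic sum $\sum_{n\le h}1/n=\log h+\gamma+\Odi{h^{-1}}$ producing Euler's constant; this is the route you dismissed as ``more delicate,'' but it is the one that actually pins down both the constant and the $\Odi{1}$.
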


Before the proof, we remark that this lemma is consistent with the constant
in Lemma~2 of Languasco, Perelli and Zaccagnini
\cite{LanguascoPZ2012a}, taking into account the fact that our
variable $h$ here corresponds to $\pi \kappa$ there.

\begin{proof}
The results on $U(\alpha, h)$ can be immediately obtained
by integrals n.3.821.9 and  n.4.423.3, respectively  on pages 460
and 594 of Gradshteyn and Ryzhik \cite{GradshteynR2007}.

Now we prove the part concerning $K(\alpha, h)$.
The first identity immediately follows by isolating the contribution
of $n=0$ in the definition of $K(\alpha,h)$ and making a trivial computation.
To prove the second identity,
separating again the contribution of the term $n = 0$ and using
standard properties of the complex exponential functions, we have
\begin{align*}
  I(h)  
  &:=
  2 \int_0^{1 / 2} \log(h \alpha) \, K(\alpha, h) \, \dx \alpha \\
 & =
  2 h \int_0^{1 / 2} \log(h \alpha) \, \dx \alpha
  +
  4 \sum_{1 \le n \le h} (h - n)
    \int_0^{1 / 2} \log(h \alpha) \, \cos(2 \pi n \alpha) \, \dx \alpha \\
  &=
  h \log h - h (\log 2 + 1)
  +
  2
  \sum_{1 \le n \le h} (h - n)
    \int_0^1
      \log \Bigl(\frac{h \beta}{2} \Bigr) \, \cos(\pi n \beta) \, \dx \beta.
\end{align*}
We remark that
\[
  \int_0^1
    \log \Bigl(\frac{h}{2}  \Bigr) \, \cos(\pi n \beta) \, \dx \beta
  =
  0
\]
whenever $n$ is a positive integer, and hence we can write
\begin{align*}
  I(h)
  &=
  h \log h - h (\log 2 + 1)
  +
  2
  \sum_{1 \le n \le h} (h - n)
    \int_0^1
      \log\beta \, \cos(\pi n \beta) \, \dx \beta \\ 
&=
  h \log h - h (\log 2 + 1)
  -
  \sum_{1 \le n \le h} \frac{h - n}{n}
  -
  2
  \sum_{1 \le n \le h} (h - n) \frac{\si(\pi n)}{\pi n},
\end{align*}
by Formula 4.381.2 on page 581 of \cite{GradshteynR2007},
where the sine integral function is defined by
\begin{equation}
\label{si-def}
  \si(x)
  =
  - \int_x^{+ \infty} \frac{\sin t }t \, \dx t
\end{equation}
for $x > 0$. 
The elementary relation
\(
  \sum_{1 \le n \le h} 1/n  =
  \log h + \gamma + \Odi{h^{-1}}
\)
shows that
\begin{align*}
  I(h) 
  &=
  -
  h (\log 2 + \gamma) + \Odi{1}
  -
  \frac{2 h}\pi
  \sum_{1 \le n \le h} \frac{\si(\pi n)}n
  +
  \frac2\pi
  \sum_{1 \le n \le h} \si(\pi n).
\end{align*}
Finally we remark that Eq.~\eqref{si-def}  implies, by means of a
simple integration by parts, that $\si(x) \ll x^{-1}$ as
$x \to + \infty$.
Hence
\[
  \sum_{1 \le n \le h} \frac{\si(\pi n)}n
  =
  \sum_{n \ge 1} \frac{\si(\pi n)}n
  +
  \Odi{h^{-1}}
  =
  \frac\pi2 (\log \pi  - 1)
  +
  \Odi{h^{-1}},
\]
by Formula 6.15.2 on page 154 of \cite{OlverLBC2010}.
Moreover, by a double partial integration, we get
 \[
\si(x) = - \frac{\cos x}{x} - \frac{\sin x }{x^{2}}+ 2 \int_{x}^{+\infty}
\frac{\sin t }{t^{3}}\, \dx t
\]
and hence 
 \[
 \sum_{1 \le n \le h} \si(\pi n) =
  \sum_{1 \le n \le h}  
   \frac{(-1)^{n+1}}{\pi n}  +  
     \Odig{\sum_{1 \le n \le h}  \frac{1}{n^2} }
     \ll 1.
\]
In conclusion
\[
  I(h)
  =
    -
  h (\log 2 + \gamma)
  -
  \frac{2 h}\pi
  \Bigl(
    \frac\pi2 (\log\pi - 1)
    +
    \Odi{h^{-1}}
  \Bigr)
  +
  \Odi{1},
\]
and  Lemma \ref{K-U-lemma} is proved.
\end{proof}

Now we see some information about the order of magnitude
of $K(\alpha,h)$ and its first derivative.

\begin{Lemma}
\label{K'-lemma}
For $h \ge 1$ we have
\[
  K(\alpha,h)\ll \min \Bigl(h^2,\Vert \alpha \Vert^{-2} \Bigr),
\]
and
\[
  \frac{\dx}{\dx \alpha} K(\alpha, h)
  \ll
  h \Vert \alpha \Vert \min\Bigl( h^3 , \Vert \alpha \Vert^{-3} \Bigr).
\]
\end{Lemma}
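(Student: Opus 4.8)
The plan is to treat the two estimates separately, in each case combining an elementary bound valid for \emph{all} $\alpha$ (which captures the behaviour near $\alpha=0$) with a bound exhibiting cancellation that is useful when $\Vert\alpha\Vert$ is not too small. Throughout I write $N=\lfloor h\rfloor$, so that $N\asymp h$ since $h\ge 1$, and I use the elementary inequality $\vert\sin\pi\alpha\vert\ge 2\Vert\alpha\Vert$, valid for every real $\alpha$. First I would record a closed form for $K$: since $h-\vert n\vert=(h-N)+(N-\vert n\vert)$ for $\vert n\vert\le N$, one gets
\[
  K(\alpha,h)=(h-N)\,D_N(\alpha)+F_N(\alpha),
\]
where $D_N(\alpha)=\sum_{\vert n\vert\le N}e(n\alpha)=\sin((2N+1)\pi\alpha)/\sin(\pi\alpha)$ is the Dirichlet kernel and $F_N(\alpha)=\sum_{\vert n\vert\le N}(N-\vert n\vert)e(n\alpha)=\bigl(\sin(\pi N\alpha)/\sin(\pi\alpha)\bigr)^2$ is the Fej\'er kernel (the terms $n=\pm N$ contributing $0$).

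For the first displayed estimate, the trivial bound $\vert K(\alpha,h)\vert\le\sum_{\vert n\vert\le h}(h-\vert n\vert)\ll h^2$ gives the $h^2$ side of the minimum, while the closed form together with $\vert\sin\pi\alpha\vert\ge 2\Vert\alpha\Vert$ and $0\le h-N<1$ gives $\vert K(\alpha,h)\vert\ll\Vert\alpha\Vert^{-1}+\Vert\alpha\Vert^{-2}\ll\Vert\alpha\Vert^{-2}$ (using $\Vert\alpha\Vert\le 1/2$), which is the other side.

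For the derivative I would first record the real form
\[
  \frac{\dx}{\dx\alpha}K(\alpha,h)=-4\pi\sum_{1\le n\le h}(h-n)\,n\sin(2\pi n\alpha),
\]
obtained by differentiating termwise and using that $(h-\vert n\vert)n$ is odd in $n$. The bound valid for all $\alpha$ comes from the pointwise inequality $\vert\sin(2\pi n\alpha)\vert\le 2\pi n\Vert\alpha\Vert$ (which holds because $\sin(2\pi n\alpha)=\sin(2\pi n(\alpha-m))$ for the nearest integer $m$): this yields $\vert K'(\alpha,h)\vert\ll\Vert\alpha\Vert\sum_{n\le h}(h-n)n^2\ll h^4\Vert\alpha\Vert=h\Vert\alpha\Vert\cdot h^3$, the $h^3$ side of the claimed minimum. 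For the other side I would differentiate the closed form: writing $g(\alpha)=\sin(\pi N\alpha)/\sin(\pi\alpha)$, the elementary quotient estimates $\vert g(\alpha)\vert\ll\Vert\alpha\Vert^{-1}$ and $\vert g'(\alpha)\vert,\vert D_N'(\alpha)\vert\ll N\Vert\alpha\Vert^{-1}+\Vert\alpha\Vert^{-2}$, combined with $F_N'=2gg'$, give $\vert K'(\alpha,h)\vert\ll N\Vert\alpha\Vert^{-2}+\Vert\alpha\Vert^{-3}\ll h\Vert\alpha\Vert^{-2}+\Vert\alpha\Vert^{-3}$.

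Finally I would combine the two derivative estimates. When $\Vert\alpha\Vert\le 1/h$ the minimum equals $h^3$ and the first bound $h^4\Vert\alpha\Vert=h\Vert\alpha\Vert\cdot h^3$ is exactly what is wanted; when $\Vert\alpha\Vert\ge 1/h$ the minimum equals $\Vert\alpha\Vert^{-3}$, and since then $\Vert\alpha\Vert^{-3}\le h\Vert\alpha\Vert^{-2}$ the second bound simplifies to $h\Vert\alpha\Vert^{-2}=h\Vert\alpha\Vert\cdot\Vert\alpha\Vert^{-3}$, again as required. I expect the only delicate point to be the derivative near $\alpha=0$: the naive termwise bound $\vert\sin\vert\le1$ only yields $\ll h^3$, and it is the extra factor $\Vert\alpha\Vert$ forced by the linear inequality $\vert\sin(2\pi n\alpha)\vert\le 2\pi n\Vert\alpha\Vert$ that produces the needed $h^4\Vert\alpha\Vert$ and reflects the vanishing of the (odd) derivative at the origin.
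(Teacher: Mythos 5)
Your proof is correct and follows essentially the same route as the paper: the same decomposition of $K(\alpha,h)$ into a Fej\'er kernel plus $\{h\}$ times a Dirichlet kernel, the same trivial $h^2$ bound versus the $\Vert\alpha\Vert^{-2}$ bound from $\vert\sin\pi\alpha\vert\gg\Vert\alpha\Vert$, and the same case split at $\Vert\alpha\Vert\asymp 1/h$ for the derivative. The only (harmless) deviation is that near the origin you get the $h^4\Vert\alpha\Vert$ bound directly from the termwise inequality $\vert\sin(2\pi n\alpha)\vert\le 2\pi n\Vert\alpha\Vert$, whereas the paper Taylor-expands the numerator of the differentiated quotient $\sin(\pi h_0\alpha)/\sin(\pi\alpha)$; both yield the same estimate.
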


\begin{proof}
We assume that $\alpha \in (0, 1 / 2)$ as we may.
We let $h_0 = [h]$. We first remark that
\begin{align*}
K(\alpha,h) 
&=
\sum_{-h_0 \le n \le h_0} (h_0 - \vert n \vert) \, e(n \alpha)
+
\{h\} \sum_{-h_0 \le n \le h_0} \, e(n \alpha)
\\
&=
\Bigl( \frac{\sin(\pi h_0 \alpha)}{\sin(\pi \alpha)} \Bigr)^2
+
\{h\} \sum_{-h_0 \le n \le h_0} \, e(n \alpha).
\end{align*}
Recalling the identity
\begin{equation}
\label{identity} 
  \sum_{-h_0 \le n \le h_0} e(n \alpha)
  =
  1
  +
  2
  \frac{\sin(\pi h_0 \alpha)}{\sin(\pi \alpha)}
  \cos\bigl( \pi (h_0 + 1) \alpha \bigr)
\end{equation}
and the estimate
\begin{equation}
\label{est-ratio}
  \frac{\sin(\pi h_0 \alpha)}{\sin(\pi \alpha)}
  \ll
  \min\bigl( h_0, \alpha^{-1} \bigr),
\end{equation}
the first part of the lemma immediately follows.

For the second inequality we first remark that
\begin{align*}
  \frac{\dx}{\dx \alpha} K(\alpha, h)
  &=
  2 \pi i
  \sum_{-h \le n \le h} (h - \vert n \vert) \, n \, e(n \alpha) \\
  &=
  2 \pi i
  \sum_{-h_0 \le n \le h_0} (h_0 - \vert n \vert) \, n \, e(n \alpha)
  +
  2 \pi i \{ h \}
  \sum_{-h_0 \le n \le h_0} n \, e(n \alpha) \\
  &=
  A(\alpha, h)
  +
  B(\alpha, h),
\end{align*}
say.
By \eqref{identity} we get that
\[
  A(\alpha, h)
  =
  \frac{\dx}{\dx \alpha}
    \Bigl( \frac{\sin(\pi h_0 \alpha)}{\sin(\pi \alpha)} \Bigr)^2
  \qquad\text{and}\qquad
  B(\alpha, h)
  =
  2 \{ h \}
  \frac{\dx}{\dx \alpha}
    \Bigl(
      \frac{\sin(\pi h_0 \alpha)}{\sin(\pi \alpha)}
      \cos\bigl( \pi (h_0 + 1) \alpha \bigr)
    \Bigr),
\]
respectively.
We remark that
\begin{equation}
\label{deriv}
  \frac{\dx}{\dx \alpha}
    \frac{\sin(\pi h_0 \alpha)}{\sin(\pi \alpha)}
  =
    \frac{\pi h_0 \cos(\pi h_0 \alpha) \sin(\pi \alpha)
          -
          \pi \sin(\pi h_0 \alpha)\cos(\pi \alpha)}
         {(\sin(\pi \alpha))^2}.
\end{equation}
For $\alpha \le h_0^{-1}$ a standard development shows that the
numerator in \eqref{deriv} is $\ll \alpha^3 h_0^3$, while the
denominator is $\gg \alpha^2$.
For $\alpha \in [h_0^{-1}, 1 / 2]$ it is easy to see that the
right-hand side of \eqref{deriv} is $\ll h_0 \alpha^{-1}$.
Summing up, we have
\begin{equation}
\label{est-deriv}
  \frac{\dx}{\dx \alpha}
    \frac{\sin(\pi h_0 \alpha)}{\sin(\pi \alpha)}
  \ll
  \min\bigl( \alpha h_0^3, \alpha^{-1} h_0 \bigr).
\end{equation}
A straightforward computation reveals that
\[
  \frac{\dx}{\dx \alpha}
    \Bigl( \frac{\sin(\pi h_0 \alpha)}{\sin(\pi \alpha)} \Bigr)^2
  =
  2
  \frac{\sin(\pi h_0 \alpha)}{\sin(\pi \alpha)}
  \frac{\dx}{\dx \alpha}
    \frac{\sin(\pi h_0 \alpha)}{\sin(\pi \alpha)}
  \ll
  \min\bigl( \alpha h_0^4, \alpha^{-2} h_0 \bigr),
\]
by \eqref{est-deriv} and \eqref{est-ratio}.
Furthermore
\begin{align*}
  \frac{\dx}{\dx \alpha}
    \Bigl(
      \frac{\sin(\pi h_0 \alpha)}{\sin(\pi \alpha)}
      \cos\bigl( \pi (h_0 + 1) \alpha \bigr)
    \Bigr)
  &=
  \frac{\dx}{\dx \alpha}
  \Bigl(
    \frac{\sin(\pi h_0 \alpha)}{\sin(\pi \alpha)}
  \Bigr)
  \cos\bigl( \pi (h_0 + 1) \alpha \bigr) \\
  &\qquad-
  \pi (h_0 + 1)
  \frac{\sin(\pi h_0 \alpha)}{\sin(\pi \alpha)}
  \sin\bigl( \pi (h_0 + 1) \alpha \bigr),
\end{align*}
and a similar computation yields
\[
   B(\alpha, h)
  \ll
  \min \bigl( \alpha h_0^3, \alpha^{-1} h_0 \bigr),
\]
which is of lower order of magnitude.
Hence the second part of Lemma \ref{K'-lemma} is proved.
\end{proof}
We also remark that estimates similar to the ones in 
Lemma \ref{K'-lemma} hold for $U(\alpha, h)$ too; since
they immediately follow from the definition we
omit their proofs.

Let now
\[
\widehat{f}(t)=\int_{-\infty}^{+\infty}f(x) e(-tx)\, \dx x
\]
be the Fourier transform of $f(x)$. 
We need the following auxiliary result which is based on
Gallagher's lemma.
\begin{Lemma}
\label{Lemma-Go}
Let $0<h\leq X$,
\begin{equation}
\label{R-tildeR-def}
R(\alpha) = S(\alpha) - T(\alpha) 
\quad \text{and} \quad
\Rtilde(\alpha) = \Stilde(\alpha) - \Ttilde(\alpha).
\end{equation} 
Then
\[
\int_{-1/2}^{1/2} 
  \vert R(\alpha) \vert^2\,   K(\alpha,h) \, \dx \alpha
=
\int_{-\infty}^{+\infty} \vert R(\alpha) \vert^2 \, U(\alpha,h)\, \dx \alpha
= J(X,h) + \Odi{E(X,h)},
\]
where $E(X,h)$ is defined in \eqref{E(X,h)-def}. Moreover we have,
\[
\int_{-1/2}^{1/2} 
  \vert \Rtilde(\alpha) \vert^2\,   K(\alpha,h) \, \dx \alpha
=
\int_{-\infty}^{+\infty} \vert \Rtilde(\alpha) \vert^2\, U(\alpha,h)\, \dx
\alpha
= \Jtilde(X,h) + \Odim{\Etilde(X,h)},
\]
where, for every fixed $\eps>0$, we define
\begin{equation}
\label{E-tilde-def}
\Etilde(X,h) 
=
\begin{cases}
(h+1)^3  (\log X)^{2}  &\text{(uncond.) uniformly for}\ 0<  h \leq  X^{\eps}\\
h^3   &\text{(uncond.) uniformly for}\   X^{\eps}< h \leq X\\
(h+1)^{2}  (\log X)^{4}  &\text{(under RH) uniformly for}\ 0< h \leq X .
\end{cases}
\end{equation}
\end{Lemma}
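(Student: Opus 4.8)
The plan is to establish the lemma via Gallagher's lemma, which converts an integral of an exponential sum squared against the Fejér-type kernel into a mean-square integral of the associated counting function. I would first recall that $R(\alpha) = S(\alpha) - T(\alpha) = \sum_{n\le X}(\Lambda(n)-1)e(n\alpha)$, so that the coefficient sequence attached to $R$ is $a_n = \Lambda(n)-1$. The partial sums of these coefficients over an interval $(x, x+h]$ are precisely $\bigl(\psi(x+h)-\psi(x)\bigr) - \bigl([x+h]-[x]\bigr)$, and the latter integer-part difference equals $h + O(1)$. This $O(1)$ discrepancy between $[x+h]-[x]$ and $h$ is exactly what will produce the error term $E(X,h)$, so tracking it carefully is the crux.

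**The central identity I would invoke is Gallagher's lemma** in its two standard shapes. For the $U(\alpha,h)$ integral, Gallagher's lemma states that
\[
\int_{-\infty}^{+\infty} \Bigl\vert \sum_n a_n e(n\alpha)\Bigr\vert^2 U(\alpha,h)\,\dx\alpha
=
\int_{-\infty}^{+\infty} \Bigl\vert \sum_{x < n \le x+h} a_n \Bigr\vert^2 \dx x,
\]
since $U(\alpha,h)=(\sin(\pi h\alpha)/\pi\alpha)^2$ is the squared modulus of the Fourier transform of the indicator of an interval of length $h$. For the $K(\alpha,h)$ integral one uses the periodic analogue on $[-1/2,1/2]$, which holds because $K(\alpha,h)$ is the Fejér kernel and the sequence $a_n$ is supported on $n\le X$. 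The equality of the two left-hand integrals is then immediate from Gallagher's lemma applied twice. Substituting $a_n=\Lambda(n)-1$, the right-hand side becomes $\int (\psi(x+h)-\psi(x)-([x+h]-[x]))^2\,\dx x$, and I would expand the square, writing $[x+h]-[x] = h - \{x+h\}+\{x\}$, to separate the main term $J(X,h)$ from cross terms and error terms involving the fractional parts.

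**The main obstacle — and the source of the three cases in $E(X,h)$ — is estimating these error terms uniformly.** After expanding, the difference between the Gallagher integral and $J(X,h)$ consists of terms like $\int (\psi(x+h)-\psi(x)-h)(\{x\}-\{x+h\})\,\dx x$ and $\int(\{x\}-\{x+h\})^2\,\dx x$. The second is trivially $O(hX)$ at worst but more carefully $O((h+1))$ over the relevant range after using periodicity; the first requires bounding $\psi(x+h)-\psi(x)-h$ in mean. Unconditionally one controls this via the Brun–Titchmarsh or Selberg-type bounds giving the $(h+1)^3(\log X)^2$ and $h^3$ estimates in the two ranges of $h$ relative to $X^\eps$, while under RH one uses the explicit formula to obtain $\psi(x+h)-\psi(x)-h \ll (h+1)^{1/2}x^{1/2+\eps}$-type bounds, integrating to the sharper $(h+1)X(\log X)^4$. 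I expect the careful case analysis distinguishing $h\le X^\eps$, $X^\eps \le h\le X$, and the conditional RH range to be where most of the genuine work lies; the Gallagher identity itself is formal and clean.

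**For the weighted quantities** $\Stilde,\Ttilde,\Rtilde$ the argument runs in parallel, with the crucial simplification that the exponential weight $e^{-n/X}$ removes the sharp cutoff at $n=X$. The analogue of $\psi(x+h)-\psi(x)$ is integrated against $e^{-2x/X}$ over $(0,\infty)$, and the absence of a hard truncation improves the RH error term from $(h+1)X(\log X)^4$ to $(h+1)^2(\log X)^4$, which is the content of the second displayed equation and the definition of $\Etilde(X,h)$ in \eqref{E-tilde-def}. I would carry out the same Gallagher-lemma reduction and fractional-part expansion, noting that the decay of the weight makes the tail contributions negligible and tightens the dependence on $X$ accordingly.
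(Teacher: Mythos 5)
Your overall skeleton matches the paper's: Gallagher's lemma (Lemma 1.9 of Montgomery) for the $U$-integral, and the identity $\sum_{n\in\mathbb{Z}}U(n+\alpha,h)=K(\alpha,h)$ (obtained in the paper by Poisson summation, which is exactly your ``periodic analogue'') to equate the two integrals. The first part of the lemma is in any case only cited from Lemma 1 of \cite{LanguascoP2000a}, and the paper's written proof concerns the weighted case. However, there is a genuine gap in where you locate the error term. You attribute $E(X,h)$ to the $O(1)$ discrepancy $[x+h]-[x]=h+O(1)$ and the resulting cross and square terms in the fractional parts. Those terms do not produce the bounds in \eqref{E(X,h)-def}: for instance $\int_0^X(\{x\}-\{x+h\})^2\,\dx x\asymp X\{h\}(1-\{h\})$, which for small non-integral $h$ is of order $X$ and is \emph{not} $\ll (h+1)^3(\log X)^2$, while the cross term is $\ll (X\,J(X,h))^{1/2}$. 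The three shapes of $E(X,h)$ actually come from the clipping of the window $(x,x+h]$ by the support $1\le n\le X$ of the coefficients: on the two boundary ranges of measure $O(h)$ one bounds the clipped sum \emph{pointwise} --- by Brun--Titchmarsh unconditionally, by $\psi(y)=y+\Odi{y^{1/2}(\log y)^2}$ under RH --- and squares, giving $h\cdot(h\log X)^2$, $h^3$ and $h\cdot X(\log X)^4$ respectively. Your proposed RH bound $\psi(x+h)-\psi(x)-h\ll (h+1)^{1/2}x^{1/2+\eps}$ is not what RH gives (there is no saving in $h$ pointwise), and integrating the square of any pointwise bound over all of $[1,X]$ would swamp the main term $J(X,h)$ rather than yield $(h+1)X(\log X)^4$.

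For the weighted statement the same misdirection hides the real work. In the paper, after Gallagher's lemma the weight $e^{-n/X}$ is pulled out of the window as $e^{-x/X}\bigl(1+\Odim{(h+1)/X}\bigr)$, so the relative error multiplies $\Jtilde(X,h)$ and one must prove $\Jtilde(X,h)\ll (h+1)X(\log X)^4$ under RH; this is done via the partial-integration identity \eqref{J-Jtilde}, a split of the $t$-integral at $t=h$, and Selberg's estimate \eqref{Selberg-J-estim}. The improved error $(h+1)^2(\log X)^4$ in \eqref{E-tilde-def} then arises from $\Jtilde(X,h)\cdot(h+1)/X$ together with the lower-endpoint clipping at $n\ge 1$ (an interval of length $\le h$, handled by \eqref{psi-RH-estim}), not merely from ``the decay of the weight making tails negligible.'' Without the auxiliary bound on $\Jtilde(X,h)$ --- which your sketch never identifies as necessary --- the weighted case does not close.
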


\begin{Proof}
The first part is  Lemma 1 of \cite{LanguascoP2000a}, so we skip the proof.
For the second part, we start  remarking that 
Lemma 1.9 of Montgomery \cite{Montgomery1971} gives
\begin{equation}
\label{Gall-0}
\int_{-\infty}^{+\infty} 
\vert \Rtilde(\alpha) \vert^2 \, U(\alpha,h) \, \dx \alpha 
=
\int_{-\infty}^{+\infty}
\vert \sum_{\substack{\vert n-x\vert < h/2 \\ n\geq 1}} (\Lambda(n)-1)
e^{-n/X}\vert ^2 \ \dx x.
\end{equation}
By periodicity  we have
\[
\int_{-\infty}^{+\infty}  
\vert \Rtilde(\alpha) \vert^2 \, U(\alpha,h)\, \dx \alpha 
=
\int_{-1/2}^{1/2} 
\vert \Rtilde(\alpha) \vert^2   
\Bigl(\sum_{n=-\infty}^{+\infty} U(n+\alpha,h)\Bigr)\, \dx \alpha.
\]
Since $\widehat{U}(\alpha,h) = \max(h-\vert \alpha \vert ; 0)$,
by Poisson's summation formula  and \eqref{K-U-def} we get
\[ 
\sum_{n=-\infty}^{+\infty}
U(n+\alpha,h)
=
\sum_{n=-\infty}^{+\infty}
\widehat{U}(\alpha,h) e(n\alpha) 
=  
K(\alpha,h),
\]
and hence, using \eqref{Gall-0}, we obtain
\begin{align}
\notag
\int_{-1/2}^{1/2} 
\vert \Rtilde(\alpha) \vert^2   \
&K(\alpha,h)\,  \dx \alpha 
=
\int_{-\infty}^{+\infty}
\vert \sum_{\substack{x <n \leq x + h \\ n\geq 1}}
(\Lambda(n)-1) e^{-n/X}\vert ^2 \ \dx x
\\
\label{gall-01}
&=
\int_{0}^{+\infty}
\vert \sum_{x <n \leq x + h} 
(\Lambda(n)-1) e^{-n/X}\vert ^2 \ \dx x
+\Odi{(h+1)^{2}  (\log (h+1))^{4} },
\end{align}
where in the last estimate we assumed RH and we used
\begin{equation}
\label{psi-RH-estim}
\psi(y)=y+\Odi{y^{1/2} (\log y)^{2} }
\end{equation} 
on a interval of length $\leq h$.
%[(uncond is $\ll (h+1)^3  (\log X)^{2}  $]
Noting that
\begin{align*}
  \sum_{x < n \le x + h} (\Lambda(n) - 1) e^{-n / X}
  &=
  e^{-x / X} (\psi(x + h) - \psi(x) - h)
  \Bigl(1 + \Odig{\frac{h+1}{X}} \Bigr)
\end{align*}
and recalling that $h\leq X $, from \eqref{gall-01} we have
\begin{equation*} 
\int_{-1/2}^{1/2} 
\vert \Rtilde(\alpha) \vert^2  \,
K(\alpha,h)\, \dx \alpha
=
\Jtilde(X,h) 
\Bigl(
1 + \Odig{\frac{h+1}{X}}
\Bigr)
+\Odi{(h+1)^{2}  (\log X)^{4} }.
\end{equation*}
%
%[(uncond l'ultimo errore e'(h+1)^3  (\log X)^{2}  $]

To estimate the last error term we  connect $\Jtilde(X,h)$
to $J(X,h)$. A partial integration immediately gives
\begin{equation}
\label{J-Jtilde}
\Jtilde(X,h)
  =
  \frac2X
  \int_0^{\infty} J(t, h) e^{-2t/X} \, \dx t.
\end{equation}
To estimate the right-hand side of \eqref{J-Jtilde},
we split the range of integration into
$[0, h] \cup [h, +\infty)$.
A direct computation using  \eqref{psi-RH-estim} shows that
\[
  \int_0^h J(t, h) e^{- 2 t / X} \, \dx t
  \ll
  h  (\log h)^{4} 
  \int_0^h t \ e^{- 2 t / X} \, \dx t
  \ll
  h^3  (\log h)^{4} .
\]
Still assuming RH, the Selberg \cite{Selberg1943} estimate  
gives,  for $1\leq h \leq t$, that
\begin{equation}
\label{Selberg-J-estim}
J(t,  h) \ll ht (\log t)^{2} 
\end{equation}
and so we get
\[
  \int_h^{+\infty} J(t, h) e^{- 2 t / X} \, \dx t
  \ll
  h \int_h^{+\infty} t  (\log t)^{2} \ e^{- 2 t / X}  \, \dx t
    \ll
  h X^2  (\log X)^{2} .
\]
%
%[uncond is $\ll h^2 X^2 \exp(-c \log^{1/3} X \log\log^{-2/3} X)$ for
%$h>X^{1/6}$]
Summing up, under RH we have
\begin{equation*}
%\label{tilde-J-RH-estim}
\Jtilde(X,  h) \ll (h+1)X (\log X)^{4} 
\end{equation*}
%
%[uncond is $\ll h^2 X \exp(-c \log^{1/3} X \log\log^{-2/3} X)$ for $h>X^{1/6}$]and, for $0< h \leq X$, by \eqref{almost-done} and \eqref{tilde-J-RH-estim}
we can finally write 
\begin{align*}
\int_{-1/2}^{1/2} 
\vert \Rtilde(\alpha) \vert^2   \,
K(\alpha,h)\, \dx \alpha
&=
\Jtilde(X,h)
+ 
\Odi{(h+1)^{2} (\log X)^{4} }.
\end{align*}
%
%[uncond is $\ll h^3  \exp(-c \log^{1/3} X \log\log^{-2/3} X)$ for $h>X^{1/6}$]
%
The unconditional cases follow by replacing \eqref{psi-RH-estim} with
the Brun-Titchmarsh inequality and \eqref{Selberg-J-estim}
with the Lemma in \cite{Languasco1998}.  
\end{Proof}

In the next sections we will also need the following remark.
Let $\xi>0$ and $\delta \xi =1/2$.
In this case $U(\alpha, \delta) \gg \delta^{2}$ for 
$\vert \alpha\vert \leq \xi$; hence
by the first equation in Lemma \ref{Lemma-Go} we obtain
\begin{equation}
\label{RH-R-estim-a}
\int_{-\xi}^{\xi} \vert R(\alpha) \vert^2 \, \dx \alpha
\ll 
\xi^{2}  
\Bigl( J\Bigl(X, \frac{1}{2\xi}\Bigr) +  E\Bigl(X,\frac{1}{2\xi}\Bigr)\Bigr).
\end{equation}
By \eqref{Selberg-J-estim} and \eqref{E(X,h)-def}, under RH we immediately
obtain, for every $1/(2X) \leq \xi \leq 1/2$, that
\begin{equation}
\label{RH-R-estim-b}
\int_{-\xi}^{\xi} \vert R(\alpha) \vert ^{2} \ \dx \alpha 
\ll X \xi (\log X)^{4}.
\end{equation}

%%%%%%%%%%%%%%%%%%%%%%%%%%%%%%%%%
\section{Proof of Theorem \ref{Th-anda}}

We use Lemma \ref{Lemma-Go} in the form
\begin{equation}
\label{startpoint}
J(X,h) = 
\int_{-1/2}^{1/2}  \vert R(\alpha) \vert^2\,  K(\alpha,h) \, \dx \alpha 
+ 
\Odi{E(X,h)},
\end{equation}
where $R(\alpha)$ is defined in \eqref{R-tildeR-def}.
Observe that both $ \vert R(\alpha) \vert^2$ and $K(\alpha,h)$ are
even functions of $\alpha$, and hence we may restrict our attention to
$\alpha\in [0,1/2]$.  
Writing
\begin{equation}
\label{f-def}
f(X,\alpha) = X\log (X\alpha) + \bigl(\frac{c}{2}+1\bigr) X
= X\log \frac{X}{h} + X\log (h\alpha) + \bigl(\frac{c}{2}+1\bigr) X,
\end{equation}
we can approximate $\vert R(\alpha) \vert^2$ as  
$ \vert R(\alpha) \vert^2 =  f(X,\alpha) + 
\bigl( \vert R(\alpha) \vert^2 - f(X,\alpha) \bigr)$.
Using Lemma \ref{K-U-lemma} and \eqref{f-def},
we obtain
\begin{equation}
\label{main-term}
\int_{0}^{1/2}  f(X,\alpha) K(\alpha,h) \, \dx \alpha 
=
\frac{h}{2} X\log \frac{X}{h}+ c' \frac{h}{2} X + \Odi{X},
\end{equation}
where $c'$ is defined in \eqref{c'-def}.

Let now $U_{1}<1/h <U_{2} \leq 1$ be two parameters to be chosen later. 
Hence by Lemma \ref{K'-lemma} and \eqref{RH-R-estim-b} we immediately obtain
\begin{align}
\notag
 \int_{0}^{U_{1}} 
\bigl( \vert R(\alpha) \vert^2 -  f(X,\alpha) \bigr) K(\alpha,h) \, \dx \alpha 
&\ll
h^{2} \int_{0}^{U_{1}}  \vert R(\alpha) \vert^2 \, \dx \alpha 
+
 h^{2} \int_{0}^{U_{1}} f(X,\alpha)\, \dx \alpha 
\\
\label{coda-bassa-estim}
&
\ll
h^{2} U_{1} X  (\log X)^{4} .
\end{align}
Again by Lemma \ref{K'-lemma} and \eqref{RH-R-estim-b},
by partial integration we have
\begin{align}
\notag
 \int_{U_{2}}^{1/2} 
\bigl( \vert R(\alpha) \vert^2 - f(X,\alpha) \bigr) K(\alpha,h) \, \dx \alpha 
&
\ll
\int_{U_{2}}^{1/2} \frac{\vert R(\alpha) \vert^2}{\alpha^{2}} \, \dx \alpha 
 +  
\int_{U_{2}}^{1/2} \frac{f(X,\alpha)}{\alpha^{2}} \, \dx \alpha 
\\
\label{coda-alta-estim}
&
\ll
\frac{X  (\log X)^{4} }{U_{2}}.
\end{align}
From \eqref{coda-bassa-estim}-\eqref{coda-alta-estim}
it is clear that the optimal choice is 
$h^{2} U_{1} = 1/U_{2}$.
We now evaluate
\[
\int_{U_{1}}^{U_{2}} 
\bigl( \vert R(\alpha)\vert^2- f(X,\alpha) \bigr) K(\alpha,h) \, \dx \alpha .
\]
A direct computation and the hypothesis show that
\[
\int_{0}^{\xi} 
\bigl( \vert R(\alpha)\vert^2- f(X,\alpha) \bigr)  \, \dx \alpha 
\ll 
\frac{(X \xi)^{1-a}} { (\log X\xi)^{b}} ,
\]
and hence, by partial integration and Lemma \ref{K'-lemma}, we obtain
\begin{align*}
\int_{U_{1}}^{U_{2}} 
\bigl( \vert R(\alpha)\vert^2- f(X,\alpha) \bigr) K(\alpha,h) \, \dx \alpha 
&\ll 
h^2 \frac{(X U_1)^{1-a}}{(\log X)^{b}} 
+
\frac{X^{1-a} U_2^{-1-a}}{(\log X)^{b}} 
\\
&+
\frac{h X^{1-a}}{(\log X)^{b}} 
 \int_{U_{1}}^{U_{2}} 
 \xi^{2-a}  \min\bigl( h^3, \xi^{-3} \bigr) \, \dx \xi .
\end{align*}
Using the constraints $h^{2} U_{1} = 1/U_{2}$ and $U_1<1/h$,
the right-hand side is 
\begin{align*}
\ll
\frac{h^{1+a} X^{1-a}}{(\log X)^{b}}  
 +
\frac{h X^{1-a}}{(\log X)^{b}}  
 \int_{1/h}^{U_{2}} 
 \xi^{-1-a}   \, \dx \xi 
 \ll
\frac{h^{1+a} X^{1-a}}{(\log X)^{b}} 
+
R_{a,b}(X,h,U_2) ,
\end{align*}
where 
\[
R_{a,b}(X,h,U_2)
=
\begin{cases}
h X \log(hU_2) (\log X)^{-b}  & \text{if} \ a=0\\
h^{1+a}X^{1-a}$ $ (\log X)^{-b}  & \text{if} \ a>0.
\end{cases}
\]
%If $a=0$ the last integral is $\ll h X \log(hU_2) (\log X)^{-b} $,
%while, for $a>0$, it is $\ll h^{1+a}$ $X^{1-a} (\log X)^{-b} $.
Combining such results we get
\begin{align}
\label{center-estimate}
\int_{U_{1}}^{U_{2}} 
\bigl( \vert R(\alpha)\vert^2- f(X,\alpha) \bigr) K(\alpha,h) \, \dx \alpha 
\ll
R_{a,b}(X,h,U_2).
\end{align} 
Hence, by \eqref{coda-bassa-estim}-\eqref{center-estimate} and $h^{2} U_{1} =
1/U_{2}$ we get
\begin{equation}
\label{final-error}
\int_{0}^{1/2} 
\bigl( \vert R(\alpha) \vert^2 - f(X,\alpha) \bigr) K(\alpha,h) \, \dx \alpha 
\ll 
\frac{X  (\log X)^{4} }{U_{2}}
+
R_{a,b}(X,h,U_2).
\end{equation}
Choosing 
\[
U_2= \frac{X^a (\log X)^{b+4} }{h^{1+a}}   
\quad \text{and} \quad
U_1= \frac{h^{a-1}}{X^a (\log X)^{b+4}} ,
\]
by \eqref{main-term} and \eqref{final-error} we finally get
\[
\int_{0}^{1/2} 
\vert R(\alpha) \vert^2\,  K(\alpha,h) \, \dx \alpha 
=
\frac{h}{2} X\log \frac{X}{h}+ c' \frac{h}{2} X + 
\Odi{X + R_{a,b}(X,h)}
\]
where $c'$ and $R_{a,b}(X,h)$ are defined in 
\eqref{c'-def} and \eqref{R-a-b-def}.
Theorem \ref{Th-anda} follows from \eqref{startpoint}.

\section{Proof of Theorem \ref{Th-rianda}}

We adapt the proof of Lemma 5 of \cite{LanguascoPZ2012a} (which is an explicit
form of Lemma 4 of \cite{GoldstonM1987}). We recall that
$0<\eta<1/4$ is a parameter to be chosen later and
\[
K_\eta(x) 
= \frac{\sin (2\pi x) + \sin (2\pi (1+\eta)x)} {2\pi x(1-4\eta^2 x^2)},
\]
so that
\[
\widehat{K}_\eta(t) = 
\begin{cases}
1 & \text{if}\  \vert t \vert  \leq 1\\
\cos^2\Bigl(\dfrac{\pi( \vert t \vert -1)}{2\eta}\Bigr)& \text{if}\ 1\leq \vert
t \vert \leq 1+\eta\\
0 & \text{if}\  \vert t \vert \geq 1+\eta
\end{cases}
\]
and
\begin{equation}
\label{K''-eta-estim}
K''_\eta(x) \ll \min(1;(\eta x)^{-3}),
\end{equation}
see Eqs.~(3.14)-(3.15) and Lemma 4 of  \cite{LanguascoPZ2012a}.
Moreover, by Lemma 3 of \cite{LanguascoPZ2012a},  we also have
\begin{equation}
\label{K-transf}
\widehat{K}_\eta(t) 
=
\int_{0}^{\infty} K''_\eta(x)  U(t,x) \, \dx x.
\end{equation}

Hence, again considering only positive values of $\alpha$, we have
\begin{equation}
\label{bounds-1} 
\int_{0}^{\infty}  \vert R(\alpha) \vert^2\, 
\widehat{K}_\eta\Bigl( \frac{ \alpha}{\xi} (1+\eta)\Bigr)  \, \dx \alpha 
\leq 
\frac{R(X,\xi)}{2}  
\leq
\int_{0}^{\infty}  \vert R(\alpha) \vert^2\, 
\widehat{K}_\eta\Bigl( \frac{ \alpha}{\xi}\Bigr) \, \dx \alpha
\end{equation}
where $R(\alpha)$ is defined in \eqref{R-tildeR-def}.
Writing $f(X,\alpha)$ as in \eqref{f-def},  
we approximate $\vert R(\alpha) \vert^2$ as
$
\vert R(\alpha) \vert^2= f(X,\alpha) + ( \vert R(\alpha) \vert^2-f(X,\alpha) ).
$
Observing that $U(\alpha/\xi,x) = \xi^2 U(\alpha, x /\xi)$,
letting
\begin{equation*} 
g(x,\xi)
=
\xi^2
\int_0^\infty 
( \vert R(\alpha) \vert^2 - f(X,\alpha))
U\Bigl(\alpha, \frac{x}{\xi} \Bigr) \ \dx \alpha 
\end{equation*}
and using \eqref{K-transf}, we get
\begin{equation}
\label{estim-1} 
\int_{0}^{\infty}  \vert R(\alpha) \vert^2\, 
\widehat{K}_\eta\Bigl( \frac{ \alpha}{\xi}\Bigr) \, \dx \alpha
= 
\int_{0}^{\infty} f(X,\alpha) 
\widehat{K}_\eta\Bigl( \frac{ \alpha}{\xi}\Bigr) \, \dx \alpha 
+ 
\int_{0}^{\infty} K''_\eta(x) g(x,\xi)
 \, \dx x 
= J_1 +J_2,
\end{equation}
say. 
A direct computation shows that
\begin{equation}
\label{J1-behav}
J_1 =  X\xi \log X\xi + \frac{c}{2} X\xi  + \Odi{\eta X\xi \log X\xi} .
\end{equation}
In order to estimate $J_2$ we first remark that by Lemma \ref{K-U-lemma}, 
\eqref{f-def} and \eqref{c'-def}, we have
\begin{equation}
\label{internal}
\xi^2
\int_{0}^{\infty} f(X,\alpha)
U \Bigl( \alpha,\frac{x}{\xi}\Bigr)\, \dx \alpha 
= 
\frac{x X \xi}{2} \log \frac{X\xi}{x} +   \frac{c'}{2} x X \xi.
\end{equation}

Now we need the following
\begin{Lemma}
\label{Lem-RH-estim} 
Assume RH and let $\eps>0$. 
We have
\begin{equation}
\label{g-estim-RH}
g(x,\xi)
\ll 
\begin{cases}
X\xi^2\log X & \text{if}\ 0 < x \leq \xi  
\\
x X\xi (\log X)^{2}  &\text{if} \ \xi \leq x \leq \xi X^{1/2-\eps}
\\ 
x X\xi  (\log X)^{4}  & \text{if} \ x \geq  \xi X^{1/2-\eps}.
\end{cases}
\end{equation} 
Assume further the hypothesis of Theorem \ref{Th-rianda}. 
We have 
\begin{equation}
\label{g-estim-PC}
g(x,\xi)
\ll 
x^{1+a}\frac{(X\xi)^{1-a}}{ (\log X)^{b} } + \frac{x^3}{\xi} (\log X)^{2}
\quad \text{if}\quad
\xi \leq x \leq \xi X^{1/2-\eps}.
\end{equation}
\end{Lemma}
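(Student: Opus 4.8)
The plan is to reduce both parts of the lemma to estimates for the error term of $J(X,h)$ with $h=x/\xi$, together with the Gallagher error $E(X,h)$. Since $|R(\alpha)|^2$ and $U(\alpha,h)$ are even, the first identity of Lemma~\ref{Lemma-Go} gives $\int_0^{\infty}|R(\alpha)|^2U(\alpha,h)\,\dx\alpha=\tfrac12\bigl(J(X,h)+\Odi{E(X,h)}\bigr)$; subtracting the main term evaluated exactly in \eqref{internal}, namely $\xi^2\int_0^{\infty}f(X,\alpha)U(\alpha,x/\xi)\,\dx\alpha=\tfrac{\xi^2}{2}hX(\log(X/h)+c')$, I obtain the clean identity
\[
g(x,\xi)=\frac{\xi^2}{2}\Bigl(J(X,h)-hX\bigl(\log\frac{X}{h}+c'\bigr)\Bigr)+\Odi{\xi^2E(X,h)},\qquad h=\frac{x}{\xi}.
\]
This holds whenever $0<h\le X$, the range in which Lemma~\ref{Lemma-Go} is valid, and it reduces everything to inserting the available bounds for the bracket and for $E(X,h)$; the only delicate point is which of the unconditional and conditional estimates in \eqref{E(X,h)-def} keeps the error within the claimed power of $\log X$.

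For the conditional estimate \eqref{g-estim-PC} the bracket is exactly the hypothesis \eqref{Thrianda-hypothesis}, contributing $\tfrac{\xi^2}{2}h^{1+a}X^{1-a}(\log X)^{-b}=\tfrac12 x^{1+a}(X\xi)^{1-a}(\log X)^{-b}$, the first term of \eqref{g-estim-PC}. For the second term I would use the \emph{unconditional} bound of \eqref{E(X,h)-def}: on $1\le h\le X^{1/2-\eps}$ it gives $E(X,h)\ll h^3(\log X)^2$, so $\xi^2E(X,h)\ll\xi^2h^3(\log X)^2=(x^3/\xi)(\log X)^2$. It is essential here to use the unconditional $E$ rather than the RH bound $E\ll(h+1)X(\log X)^4$, which would be far too lossy.

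The first two cases of \eqref{g-estim-RH} use the same identity with mere size bounds for the bracket. For $0<h\le1$ an elementary short-interval argument gives $J(X,h)\ll hX\log X$, while the main term is also $\ll X\log X$, so the bracket is $\ll X\log X$; here the unconditional $E(X,h)\ll(\log X)^2$ is negligible and $g\ll\xi^2X\log X$. For $1\le h\le X^{1/2-\eps}$, Selberg's bound \eqref{Selberg-J-estim} gives $J(X,h)\ll hX(\log X)^2$, hence the bracket is $\ll hX(\log X)^2$, and unconditionally $\xi^2E(X,h)\ll\xi^2h^3(\log X)^2\ll\xi^2hX(\log X)^2$ because $h^2\le X$; both sides are $\ll xX\xi(\log X)^2$.

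The last case $h\ge X^{1/2-\eps}$ is where I expect the main difficulty, since $h$ may now exceed $X$, where the identity above breaks down (both Lemma~\ref{Lemma-Go} and \eqref{E(X,h)-def} require $h\le X$). I would estimate $g$ directly from its definition, splitting the $\alpha$-integral at $\alpha=1/h$ and using $U(\alpha,h)\ll\min(h^2,\alpha^{-2})$ from the remark following Lemma~\ref{K'-lemma}. On $0\le\alpha\le1/h$ the factor $h^2$ together with \eqref{RH-R-estim-b} at scale $t=1/h$ gives $\int_0^{1/h}|R|^2U\ll hX(\log X)^4$; on $1/h\le\alpha\le1/2$ partial summation against the primitive of $|R|^2$, bounded again by \eqref{RH-R-estim-b}, gives the same order; and the periodic tail $\alpha\ge1/2$ contributes $\ll X(\log X)^4\ll hX(\log X)^4$. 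Multiplying by $\xi^2$ yields $\xi^2\int_0^{\infty}|R|^2U\ll xX\xi(\log X)^4$, while the exactly evaluated term $\tfrac{xX\xi}{2}(\log(X\xi/x)+c')$ of \eqref{internal} is $\ll xX\xi\log X$ throughout the polynomial range of $x$ relevant to the application; together these give the claimed bound. The threshold $X^{1/2-\eps}$ is precisely the point where the cheap bound $h^3\ll hX$ fails, which is why the exponent of $\log X$ jumps from $2$ to $4$.
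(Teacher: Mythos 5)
Your proof is correct and follows essentially the same route as the paper: the core step in both is to convert $\int_0^\infty |R(\alpha)|^2 U(\alpha,x/\xi)\,\dx\alpha$ into $\tfrac12\bigl(J(X,h)+\Odi{E(X,h)}\bigr)$ with $h=x/\xi$ via Lemma \ref{Lemma-Go}, subtract the exactly computed main term \eqref{internal}, and then feed in the elementary bound for $h\le 1$, Selberg's estimate \eqref{Selberg-J-estim} with the unconditional $E(X,h)\ll h^3(\log X)^2$ for $1\le h\le X^{1/2-\eps}$, and the hypothesis \eqref{Thrianda-hypothesis} for \eqref{g-estim-PC}; your observation that the unconditional form of $E$ must be used there is exactly the point. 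The only genuine divergence is the third case $h\ge X^{1/2-\eps}$: the paper reapplies Gallagher's lemma and inserts the RH bound \eqref{psi-RH-estim} for $\psi$ directly into the resulting $x$-integral, whereas you bound the $\alpha$-integral of $|R|^2 U$ directly from \eqref{RH-R-estim-b} and the decay $U(\alpha,h)\ll\min(h^2,\alpha^{-2})$. Both work; your variant has the small advantage of not relying on Lemma \ref{Lemma-Go} (hence on the restriction $h\le X$) in that range, which is precisely the difficulty you correctly identified.
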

\begin{Proof} 
Let $h= x/\xi$.
We first prove \eqref{g-estim-RH}.
For $0<h\leq 1$ we have 
$U(\alpha,h) \ll \min (1;\alpha^{-2})$ 
and hence by periodicity
\[
\int_0^\infty  \vert R(\alpha) \vert^2\, U(\alpha,h) \, \dx \alpha 
\ll 
\sum_{n=1}^\infty \frac{1}{n^2}\int_{n-1}^n \vert R(\alpha) \vert^2\, \dx \alpha\ll 
T\Bigl(X,\frac{1}{2}\Bigr)+S\Bigl(X,\frac{1}{2}\Bigr)
\ll X\log X,
\]
by the Prime Number Theorem (this case is, in fact, unconditional).
For $1\leq h \leq X^{1/2-\eps}$ the assertion follows immediately from
\eqref{RH-R-estim-a}, \eqref{Selberg-J-estim}, \eqref{E(X,h)-def} and
\eqref{internal}.
Finally, for $h \geq   X^{1/2-\eps}$ we use \eqref{psi-RH-estim}
after having applied
Gallagher's lemma, see Lemma 1.9 of
Montgomery \cite{Montgomery1971}, which gives
\begin{equation*} 
\int_{-\infty}^{+\infty} \vert R(\alpha)\vert ^2\, U(\alpha,h)\ \dx \alpha =
\int_{-\infty}^{+\infty}
\vert \sum_{\substack{\vert n-x\vert <h/2 \\ 1\leq n\leq X}}
(\Lambda(n)-1)\vert^2\ \dx x.
\end{equation*}
Using  \eqref{internal}, this case holds true.
We now prove \eqref{g-estim-PC}.
For $1\leq h \leq X^{1/2-\eps}$ the assertion follows immediately
from \eqref{internal}, Lemma \ref{Lemma-Go} and the hypothesis of Theorem
\ref{Th-rianda}.
\end{Proof}

Choosing now $V_1, V_2$ such that $\xi< V_1 < 1/\eta < V_2 < \xi X^{1/2-\eps}$,
we split $J_{2}$'s integration range into six subintervals. 
We obtain 
\begin{align}
\notag
J_{2}
&=
\Bigl(
\int_{0}^{\xi} 
+
\int_{\xi}^{V_1}
+
\int_{V_1}^{1/\eta}
+
\int_{1/\eta}^{V_2}
+
\int_{V_2}^{\xi X^{1/2-\eps}}
+
\int_{\xi X^{1/2-\eps}}^{+\infty}
\Bigr)
K''_\eta(x) g(x,\xi)  \, \dx x 
\\
\label{J2-split}
&=
M_{1}+M_{2}+M_{3}+M_{4}+M_{5}+M_{6},
\end{align}
say.
By Lemma \ref{Lem-RH-estim} and \eqref{K''-eta-estim}, we obtain
\[
M_{1} \ll 
X \xi^2 \log X
\int_{0}^{\xi} \dx x 
\ll
 X \xi^{3} \log X,
\]
\[
M_{2} \ll 
X\xi (\log X)^{2} 
\int_{\xi}^{V_1} x\, \dx x 
\ll
X\xi V_1^2 (\log X)^{2} ,
\]
\[
M_{3} \ll 
\int_{V_1}^{1/\eta}
\Bigl(
x^{1+a}\frac{(X\xi)^{1-a}}{ (\log X)^{b} }
+
\frac{x^3}{\xi} (\log X)^{2} 
\Bigr)
\, \dx x 
\ll
\frac{(X \xi)^{1-a}}{\eta^{2+a} (\log X)^{b} }
+
\frac{ (\log X)^{2} }{\xi \eta^{4}},
\]
\[
M_{4} \ll 
\frac{1}{\eta^{3}}
\int_{1/\eta}^{V_2}
\Bigl(
x^{a-2}\frac{(X\xi)^{1-a}}{ (\log X)^{b} }
+
\frac{ (\log X)^{2} }{\xi}
\Bigr)\, \dx x
\ll
\frac{(X \xi)^{1-a}}{\eta^{2+a} (\log X)^{b} }
+
\frac{V_2  (\log X)^{2} }{\xi \eta^{3}},
\]
\[
M_{5} \ll 
\frac{X\xi (\log X)^{2} }{\eta^{3}}
\int_{V_2}^{\xi X^{1/2-\eps}}
\frac{\dx x}{x^2}
\ll
\frac{X\xi (\log X)^{2} }{V_2\eta^{3}},
\]
and
\[
M_{6} \ll 
\frac{X \xi  (\log X)^{4} }{\eta^{3}}
\int_{\xi X^{1/2-\eps}}^{+\infty}   
\frac{\dx x}{x^{2}}
\ll
\frac{X^{1/2+\eps}(\log X)^{4}}{\eta^{3}}.\]
Hence, recalling $\xi > X^{-1/2+\eps}$, by \eqref{J2-split} and 
the definitions of $V_1$ and $V_2$  we get 
\begin{align}
J_2 
\ll 
\label{J2-estim} 
X\xi  (\log X)^{2} \Bigl(V_1^2  + \frac{ (\log X)^{2} }{V_2\eta^{3}}\Bigr)
+
\frac{(X \xi)^{1-a}}{\eta^{2+a} (\log X)^{b} } .
\end{align}
Choosing $V_1=\eta^{1/2}/\log X$ and $V_2=\log^3 X /\eta^{4}$,
by \eqref{estim-1}-\eqref{J1-behav} and \eqref{J2-estim},  we obtain
\begin{equation}
\label{totale}
\int_{0}^{\infty}  
\vert R(\alpha) \vert^2\, 
\widehat{K}_\eta\Bigl( \frac{\alpha}{\xi}\Bigr) \, \dx \alpha
=
X\xi \log X\xi + \frac{c}{2} X\xi  
+ 
\Odi{\eta X\xi \log X 
+
\frac{(X \xi)^{1-a}}{\eta^{2+a} (\log X)^{b}}}.
\end{equation}
To optimize the error term we choose
$ \eta^{3+a} = (X \xi)^{-a} (\log X)^{-b-1}$, 
so that \eqref{totale} becomes
\begin{equation}
\label{semi-finale}
\int_{0}^{\infty}  
\vert R(\alpha) \vert^2\, 
\widehat{K}_\eta\Bigl( \frac{\alpha}{\xi}\Bigr) \, \dx \alpha
=
X\xi \log X\xi + \frac{c}{2} X\xi  
+ 
\Odi{
\frac{(X \xi)^{3/(3+a)}}{ (\log X)^{(b-a-2)/(3+a)}}}.
\end{equation}

Finally, by \eqref{bounds-1} and \eqref{semi-finale},
we obtain
\[
R(X,\xi) \leq 2X\xi \log X\xi + c X\xi 
+
\Odi{\frac{(X \xi)^{3/(3+a)}}{ (\log X)^{(b-a-2)/(3+a)}}}.
\]
In a similar way we also get that
\[
R(X,\xi) \geq 2X\xi \log X\xi + c X\xi 
+
\Odi{\frac{(X \xi)^{3/(3+a)}}{ (\log X)^{(b-a-2)/(3+a)}}},
\]
and Theorem \ref{Th-rianda} follows.

\renewcommand{\bibliofont}{\normalsize} 

\providecommand{\bysame}{\leavevmode\hbox to3em{\hrulefill}\thinspace}
\providecommand{\MR}{\relax\ifhmode\unskip\space\fi MR }
% \MRhref is called by the amsart/book/proc definition of \MR.
\providecommand{\MRhref}[2]{%
  \href{http://www.ams.org/mathscinet-getitem?mr=#1}{#2}
}
\providecommand{\href}[2]{#2}

\vskip1cm 
\noindent
\begin{tabular}{l@{\hskip 16mm}l}
Alessandro Languasco               & Alessandro Zaccagnini\\
Universit\`a di Padova     & Universit\`a di Parma\\
Dipartimento di Matematica & Dipartimento di Matematica e Informatica \\
Via Trieste 63                & Parco Area delle Scienze, 53/a \\
35121 Padova, Italy            & 43124 Parma, Italy\\
\emph{e-mail}: languasco@math.unipd.it        & \emph{e-mail}:
alessandro.zaccagnini@unipr.it  
\end{tabular}


\begin{thebibliography}{10}

\bibitem{Chan2003}
T.~H. Chan, \emph{More precise pair correlation of zeros and primes in short
  intervals}, J. London Math. Soc. \textbf{68} (2003), 579--598.

\bibitem{GoldstonM1987}
D.~A. Goldston and H.~L. Montgomery, \emph{Pair correlation of zeros and primes
  in short intervals}, Analytic Number {Theory} and Diophantine Problems (A.~C.
  Adolphson and {et al.}, eds.), Proc. of a Conference at Oklahoma State
  University (1984), Birkh{\"a}user Verlag, 1987, pp.~183--203.

\bibitem{GradshteynR2007}
I.~S. Gradshteyn and I.~M. Ryzhik, \emph{Tables of integrals, series, and
  products}, Academic Press, 2007.

\bibitem{Heath-Brown1979d}
D.~R. Heath-Brown, \emph{{The fourth power moment of the Riemann zeta
  function}}, Proc. Lon. Math. Soc. \textbf{38} (1979), 385--422.

\bibitem{Languasco1998}
A.~Languasco, \emph{A note on primes and {Goldbach} numbers in short
  intervals}, Acta Math. Hungar. \textbf{79} (1998), 191--206.
%  \url{http://dx.doi.org/10.1023/A:1006553707162}, \MR{1616038} (99g:11104).

\bibitem{LanguascoP2000a}
A.~Languasco and A.~Perelli, \emph{Pair correlation of zeros, primes in short
  intervals and exponential sums over primes}, J. Number Theory \textbf{84}
  (2000), 292--304.
%  \url{http://dx.doi.org/10.1006/jnth.2000.2511}, \MR{1796516} (2001m:11151).

\bibitem{LanguascoPZ2012a}
A.~Languasco, A.~Perelli, and A.~Zaccagnini, \emph{Explicit relations between
  pair correlation of zeros and primes in short intervals}, J. Math. Anal.
  Appl. \textbf{394} (2012), 761--771.
%  \url{http://dx.doi.org/10.1016/j.jmaa.2012.04.058}, \MR{2927496}.

\bibitem{Montgomery1971}
H.~L. Montgomery, \emph{Topics in {Multiplicative} {Number} {Theory}}, Lecture
  Notes in Mathematics, Springer Verlag, 1971.

\bibitem{OlverLBC2010}
F.~W.~J. Olver, D.~W. Lozer, R.~F. Boisvert, and C.~W. Clark, \emph{{NIST}
  {Handbook} of {Mathematical} {Functions}}, Cambridge U. P., 2010.

\bibitem{Selberg1943}
A.~Selberg, \emph{On the normal density of primes in small intervals, and the
  difference between consecutive primes}, Arch. Math. Naturvid. \textbf{47}
  (1943), 87--105.

\end{thebibliography}
\end{document}